\numberwithin{equation}{section}
\def\pmod #1{\ ({\rm{mod}}\ #1)}
\theoremstyle{plain}
\newtheorem{theorem}{Theorem}
\newtheorem{lemma}{Lemma}
\newtheorem{conjecture}{Conjecture}
\theoremstyle{definition}
\newtheorem{remark}{Remark}
\patchcmd{\@settitle}{\uppercasenonmath\@title}{}{}{}
\patchcmd{\@setauthors}{\MakeUppercase}{}{}{}
\patchcmd{\section}{\scshape}{}{}{}
\begin{document}

\title
[{A new upper bound on Ruzsa's number on the Erd\H os--Tur\'{a}n conjecture}]
{A new upper bound on Ruzsa's number on the Erd\H os--Tur\'{a}n conjecture}

\author
[Y. Ding and L. Zhao] 
{Yuchen Ding \quad {\it and} \quad Lilu Zhao}

\address{(Yuchen Ding) School of Mathematical Sciences,  Yangzhou University, Yangzhou 225002, People's Republic of China}
\email{ycding@yzu.edu.cn}
\address{(Lilu Zhao) School of Mathematics, Shandong University, Jinan 250100, People's Republic of China}
\email{zhaolilu@sdu.edu.cn}

\keywords{additive basis, Ruzsa number, Erd\H os--Tur\'{a}n conjecture, representation function}
\subjclass[2010]{11B13, 11B34.}

\begin{abstract}
In this note, we show that the Ruzsa number $R_m$ is bounded by $192$ for any positive integer $m$, which improve the prior bound $R_m\le 288$ given by Y.--G. Chen in 2008.
\end{abstract}
\maketitle

\section{Introduction}
For any positive integer $m$, let $\mathbb{Z}/m\mathbb{Z}$ be the set of residue classes mod $m$ (we represented $\mathbb{Z}/m\mathbb{Z}$ by $\mathbb{Z}_{m}$ for simplicity below). For any subsets $A,B\subseteq \mathbb{Z}_{m}$ and $n\in \mathbb{Z}_{m}$, let $\sigma_{A,B}(n)$ be the number of solutions to the equation $n=x+y~(x\in A,y\in B)$. Let $\sigma_{A}(n)=\sigma_{A,A}(n)$.
The Ruzsa number $R_m$ is defined to be the least positive integer $r$ so that there exists a set $A\subseteq \mathbb{Z}_{m}$ with $$1\le \sigma_{A}(n)\le r \quad (~\forall n\in \mathbb{Z}_{m}).$$
Our aim is to prove the following improvement of Chen's bound $R_m\le 288$ \cite{Chen1}. 

\begin{theorem}\label{thm1}
For any positive integer $m$ we have $R_m\le 192$.
\end{theorem}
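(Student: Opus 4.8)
The plan is to follow the probabilistic/combinatorial construction strategy pioneered by Erdős–Rényi and refined by Ruzsa, Tang, and Chen, but to optimize the numerical constants more carefully. The starting point is the observation that it suffices to construct, for each prime $p$ (or each $m$ reducing to a suitable shape), a set $A \subseteq \mathbb{Z}_m$ with $\sigma_A(n) \le 192$ for all $n$ and $\sigma_A(n) \ge 1$ for all $n$; a standard transference argument then handles general $m$ from the prime case, or alternatively one works directly with $\mathbb{Z}_m$ using a random perfect-difference-set-type construction. I would take $A$ to be a random subset where each residue class is included independently with probability $\rho = c/\sqrt{m}$ for an appropriate constant $c$. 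Then $\mathbb{E}[\sigma_A(n)] = \rho^2 m = c^2$ for $n \ne 0$ (roughly), so choosing $c^2$ around, say, $60$–$70$ gives room to absorb fluctuations and end up below $96$ per ordered-pair count, hence below $192$ symmetrized.

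The heart of the argument is the concentration estimate: one needs that with positive probability, \emph{simultaneously} for all $n \in \mathbb{Z}_m$, both $\sigma_A(n) \ge 1$ and $\sigma_A(n) \le 192$. For the lower bound $\sigma_A(n) \ge 1$, the events "$n$ has no representation" each have probability roughly $(1-\rho^2)^{m/2} \approx e^{-c^2/2}$, which is not small enough to union-bound over $m$ values — so this is the main obstacle, and it is resolved by the standard trick of first choosing a \emph{deterministic} additive basis $B$ of bounded order (e.g. built from a perfect difference set, which has $\sigma_B(n) \ge 1$ for all $n$ with $|B| \approx \sqrt{m}$) and then letting $A = B \cup A'$ where $A'$ is the random part used only to... no — more precisely, Chen's approach keeps $A$ as a union of a few translates or uses the polynomial-method/character-sum bound on $\sigma_B$. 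The cleaner route: take $A = B$ a perfect difference set in $\mathbb{Z}_m$ when $m = q^2+q+1$, giving $\sigma_A(n) \ge 1$ automatically, and bound $\sigma_A(n)$ from above directly. For general $m$, embed into such a structured modulus and pull back, accepting a bounded multiplicative loss — this loss is exactly where the constant $288$ came from in Chen, and shaving it to $192$ requires a tighter embedding lemma.

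Concretely, the steps I would carry out are: \textbf{(1)} reduce Theorem~\ref{thm1} to the existence, for every sufficiently large $m$ of a convenient form, of a set $A$ with the stated bounds, handling small $m$ and bad residues separately by a covering/padding argument; \textbf{(2)} establish the key combinatorial lemma producing $A$ as a union $B \cup R$ where $B$ is a thin structured "basis" guaranteeing $\sigma_A \ge 1$ and $R$ is random with $\rho \sim c/\sqrt{m}$; \textbf{(3)} apply a second-moment or Chernoff-type bound to $\sigma_R(n)$, together with the deterministic bound $\sigma_B(n) = O(1)$ and the cross term $\sigma_{B,R}(n)$, to get $\sigma_A(n) \le 192$ with probability $> 0$ after union-bounding over $n$ — this needs $\Pr[\sigma_R(n) > T] < 1/m$, which the exponential tail delivers for $T$ a bounded multiple of $c^2$; \textbf{(4)} optimize $c$ and the splitting of the budget $192$ between $B$, $R$, and the cross terms, and verify the arithmetic closes.

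\textbf{The main obstacle} is step~(4) combined with the embedding loss in step~(1): to beat $288$ one cannot afford the lossy "double to pass between moduli" step used by Chen, so I expect the real work is a sharper reduction — perhaps working modulo $2m$ or using a product construction $\mathbb{Z}_{m_1} \times \mathbb{Z}_{m_2}$ and the Chinese Remainder Theorem to split a general $m$ into pieces each of good form — so that the only remaining loss is a clean factor reflecting how $\sigma_A$ on a product relates to $\sigma$ on the factors. Getting every constant to land at $192$ rather than, say, $200$ will require being careful that the union bound over $n \in \mathbb{Z}_m$ costs only a $\log m$ factor inside the exponent (harmless) and that the cross-term contribution $2\sigma_{B,R}(n)$, with $|B| = O(\sqrt m)$ and $\rho = O(1/\sqrt m)$, is genuinely $O(1)$ in expectation with an exponential tail — this is where a small improvement in the basis size $|B|$ (using a near-optimal $B_2[g]$-type set rather than a generic perfect difference set) translates directly into the final numerical gain.
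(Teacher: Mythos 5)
There is a genuine gap: your text is a plan rather than a proof, and the two ingredients on which the constant $192$ actually hinges are exactly the ones you leave open. First, the random-subset scheme with density $\rho=c/\sqrt m$ and \emph{bounded} $c$ cannot give $\sigma_A(n)\ge 1$ for all $n$ uniformly in $m$: the per-$n$ failure probability is $\approx e^{-c^2/2}$, a constant, so the union bound over $m$ residues forces $c^2\gg \log m$, which makes the expected representation count (hence any upper bound) unbounded. You notice this, but your fallback is also flawed: a perfect (planar) difference set $B$ controls \emph{differences}, not sums, so $\sigma_B(n)\ge 1$ is not ``automatic,'' and neither is a small upper bound on $\sigma_B(n)$; you would need something like Ruzsa's genuinely different construction (or Chen's concrete result $R_{2p^2}\le 48$), which you never supply. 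Second, you explicitly defer the ``sharper reduction'' between moduli to future work (``I expect the real work is a sharper reduction\dots''), yet that reduction is precisely where $192$ comes from: the paper proves a new transfer lemma stating that if $\tfrac32 m_1\le m_2<2m_1$ then $R_{m_2}\le 4R_{m_1}$ (improving Chen's factor $6$, by building the basis of $\Z_{m_2}$ as $A\cup(A+r)$ from a good basis $A$ of $\Z_{m_1}$ and exploiting that the wrap-around parameter can only be $0$ or $1$, plus a case analysis showing certain solution sets are disjoint). Combined with Chen's $R_{2p^2}\le 48$, a Panaitopol-type prime count giving a prime in $(\sqrt{m/4},\sqrt{m/3}\,]$ for $m>4356$, and an explicit set handling $m\le 4356$, this yields $R_m\le 4\cdot 48=192$. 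Your proposal contains neither a base construction with an explicit constant nor an explicit transfer factor, and the target $192$ is asserted rather than derived; as written the argument does not close.
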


Chen believed that $R_m\le 20$ for any positive integer $m$ (private communications).

Below, we briefly mention the motivation and history of Ruzsa's number. 

A subset $A$ of the natural numbers is called an asymptotic basis if $\sigma_{A}(n)\ge 1$ for all sufficiently large integers $n$.
In a cute article involving the Sidon set, Erd\H os and Tur\'{a}n \cite{Er-Tu} posed a well--known conjecture which states that if $A$ is an asymptotic basis, then the representation function $\sigma_{A}(n)$ cannot be bounded. People pay much attention to the Erd\H os--Tur\'{a}n conjecture. In 2003, Grekos, Haddad, Helou and Pihko \cite{GHHP} showed that for an asymptotic basis $A$, the representation function $\sigma_{A}(n)$ is not less than $6$ for infinitely many $n$. It was then improved to $\sigma_{A}(n)\ge 8$ infinitely many often by Borwein, Choi and Chu \cite{BCC}. In another approach, S\'{a}ndor \cite{Sandor} proved that if 
$$\limsup_{n\rightarrow\infty}\sigma_{A}(n)= K$$ for some given constant $K$, then $$\liminf_{n\rightarrow\infty}\sigma_{A}(n)\le K-2\sqrt{K}+1.$$
Chen \cite{Chen2} constructed an asymptotic basis $A$ of the natural numbers which satisfies that the set of $n$ with $\sigma_A(n)=2$ has density $1$. For more information on the Erd\H os--Tur\'{a}n conjecture, one can also refer to the nice books of Halberstam and Roth \cite{HalRo}, Tao and Vu \cite{Tao-Vu}.

Motivated by Erd\H os' question,  Ruzsa \cite{Ruzsa} constructed an asymptotic basis $A$ of natural numbers which has a bounded square mean value, i.e., 
$$\sum_{n\le N}\sigma_{A}^2(n)\ll N.$$
In his argument, Ruzsa actually proved that there is a given constant $C$ such that 
$R_m\le C$ for any positive integer $m$, which is surely a nontrivial result due to the consistent bound of all integers $m$. Using Ruzsa's construction, Tang and Chen \cite{Tang1} proved that $R_m\le 768$ for all sufficiently large $m$. Shortly after, they further \cite{Tang2} showed that $R_m\le 5120$ for any natural number $m$. In a previously beautiful work, Chen \cite{Chen1} improved the bound to $R_m\le 288$ for any natural number $m$. However, as Chen noted that he cannot improve the trivial lower bound $R_m\ge 3$ for $m\neq 1,3$. Recently, S\'{a}ndor and Yang \cite{SandorYang} give the nontrivial lower bound of $R_m$. They proved that $R_m\ge 6$ for $m\ge 36$. Moreover, S\'{a}ndor and Yang classified all the numbers $m$ with $R_m=k$ for $1\le k\le 5$ (see the Appendix of their article). In \cite{Chen3}, a related problem of the Ruzsa number was investigated by Chen and Sun. 

\section{Proofs}
Before presenting the proof of Theorem \ref{thm1}, we list a few lemmas to be used later.

\begin{lemma}\cite[Theorem 1]{Chen1}\label{lem1}
Let $p$ be a prime. Then $R_{2p^2}\le 48$.
\end{lemma}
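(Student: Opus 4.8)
The plan is to produce an explicit set $A\subseteq\mathbb{Z}_{2p^2}$ and check the two inequalities $1\le\sigma_A(n)\le 48$ separately, the whole point being to arrange the construction so that recovering an unordered representation $n=x+y$ reduces to solving a single quadratic congruence modulo $p$. Using the Chinese Remainder Theorem I identify $\mathbb{Z}_{2p^2}$ with $\mathbb{Z}_2\times\mathbb{Z}_{p^2}$ and, inside the cyclic factor $\mathbb{Z}_{p^2}$, build $A$ out of a \emph{parabola} $\{x+p\,h(x):0\le x<p\}$, where $h$ is a fixed quadratic polynomial with coefficients reduced modulo $p$ (together with a bounded number of additive translates and the two classes of the $\mathbb{Z}_2$ factor, as forced below). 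Writing a sum of two base elements as $(x_1+x_2)+p\,(h(x_1)+h(x_2))$, one sees that prescribing both $x_1+x_2$ and $h(x_1)+h(x_2)$ modulo $p$ fixes the elementary symmetric functions of $x_1,x_2$, so $\{x_1,x_2\}$ is the root set of one quadratic over $\mathbb{F}_p$ and the representation is determined up to at most two ordered pairs.

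For the upper bound I must pay for two features of the cyclic group $\mathbb{Z}_{p^2}$. First, unlike $\mathbb{Z}_p\times\mathbb{Z}_p$ it does not split as a product, so the passage from the mod-$p$ residue to the mod-$p^2$ residue of $x_1+x_2$ creates a \emph{carry}; each target $n=c+pd$ therefore splits into the two cases $x_1+x_2\in\{c,c+p\}$, with a correspondingly shifted equation for $h(x_1)+h(x_2)$. Second, because $A$ has to be enlarged to a union of boundedly many pieces in order to become a basis, I must also bound the cross-representations coming from distinct pieces and from the two $\mathbb{Z}_2$-classes. Each piece contributes at most the Sidon count $2$, the carry doubles this, and the remaining combinatorial factors (the number of pieces and the parity factor) are bounded absolutely; combining these bounded factors across the pieces is exactly the bookkeeping that one optimizes to reach the explicit constant $48$.

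The remaining and, I expect, genuinely hard inequality is the covering statement $\sigma_A(n)\ge 1$, that is $A+A=\mathbb{Z}_{2p^2}$. A single parabola covers only about half of each fibre: with $x_1+x_2=s$ fixed, the attainable second coordinate is governed by $x_1x_2$, hence omits precisely those values for which the associated discriminant is a quadratic nonresidue modulo $p$, which is why no pure Sidon-type set can itself be a basis. The repair is to union the parabola with a carefully chosen second piece---an additive shift, a change of the leading coefficient of $h$, or a sign reflection---so that the two covered halves become complementary, while the $\mathbb{Z}_2$ factor keeps the carry and non-carry regimes from colliding. The delicate point is that one needs covering at \emph{every} $n$, including the boundary residues $0\bmod p$ and the exact carry thresholds, rather than merely on a set of density one; certifying these exceptional values through the quadratic-residue and carry casework is where the two-sided tension between the Sidon (sparse) and basis (dense) requirements is resolved, and where the final constant is really pinned down.
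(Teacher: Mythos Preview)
The paper gives no proof of this lemma: it is quoted verbatim as Theorem~1 of Chen~\cite{Chen1} and used as a black box. So there is nothing in the present paper to compare your argument against; any comparison is really with Chen's original proof.

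Your sketch is in the right spirit and does match the architecture of Chen's construction: one works in $\mathbb{Z}_{2p^2}$, builds the core set from a ``parabola'' $\{x+p\,h(x)\}$ so that sums are governed by the elementary symmetric functions of $(x_1,x_2)$ and hence by a quadratic over $\mathbb{F}_p$, pays a factor for the carry $x_1+x_2\ge p$, and unions a bounded number of translates/reflections to turn the half-covering forced by quadratic residues into full covering. That is exactly Ruzsa's idea as sharpened by Chen.

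However, what you have written is a roadmap, not a proof. You never specify the actual set $A$ (which quadratic $h$, which translates, how the $\mathbb{Z}_2$ component is populated), you do not carry out the covering verification at the boundary residues you yourself flag as delicate, and you do not perform the bookkeeping that produces the constant $48$ rather than, say, $96$ or $64$. Your own closing sentence concedes this. Two small concrete points you would need to address: the CRT identification $\mathbb{Z}_{2p^2}\cong\mathbb{Z}_2\times\mathbb{Z}_{p^2}$ requires $p$ odd, so $p=2$ must be handled separately; and the complementarity of the ``two halves'' in the covering step depends on a specific choice (in Chen's paper, a particular combination of shifts tied to whether $-1$ is a square mod $p$), which you leave unspecified. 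As it stands, your proposal would not be accepted as a self-contained proof of the lemma, though it correctly identifies the mechanism behind Chen's result.
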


\begin{lemma}\label{lem2}
For $m\le 4356$, we have $R_m\le 132$.
\end{lemma}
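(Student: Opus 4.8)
The plan is to prove the slightly stronger uniform statement $R_m\le 2\lceil\sqrt m\rceil-1$ and then observe that $m\le 4356=66^2$ forces $\lceil\sqrt m\rceil\le 66$, hence $R_m\le 131\le 132$. The vehicle is the classical ``base-$q$ digit'' basis. First I would put $q=\lceil\sqrt m\rceil$, so that $q^2\ge m$, and define
$$A=B\cup C,\qquad B=\{0,1,\dots,q-1\},\quad C=\{q,2q,\dots,(q-1)q\}=q\cdot\{0,1,\dots,q-1\}\subseteq\Z_m .$$

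Next I would verify the two required properties of $A$. For the upper bound on the representation function, one only needs the trivial inequality $\sigma_A(n)=\#\{x\in A:\ n-x\in A\}\le |A|$, valid for every $n\in\Z_m$; and since, viewed as sets of integers, $B\cap C=\{0\}$, we get $|A|\le 2q-1$ (a genuine equality if $q^2\le m$, but the inequality is all we use and it holds even if the elements $qj$ collide modulo $m$). For the lower bound — that $A$ is an additive basis of $\Z_m$ — observe that reduction modulo $m$ is a homomorphism, so $A+A$ contains the image of $B+C=\{i+jq:\ 0\le i,j\le q-1\}=\{0,1,\dots,q^2-1\}$; since $q^2\ge m$ this set of integers already meets every residue class modulo $m$, whence $\sigma_A(n)\ge 1$ for all $n$. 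Combining the two bounds gives $1\le\sigma_A(n)\le 2q-1$ for every $n$, i.e. $R_m\le 2\lceil\sqrt m\rceil-1$, and for $m\le 4356$ the right-hand side is at most $2\cdot 66-1=131\le 132$, which is the assertion. (One could alternatively invoke Lemma \ref{lem1} to dispose of the special values $m=2p^2$, but this is unnecessary: the elementary construction already covers the entire range, with a little slack; tiny cases such as $m=1,2$ are checked by hand.)

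I do not expect a serious obstacle inside this lemma: the only point deserving (minimal) care is that the basis property of $A$ persists after reduction modulo $m$ even when distinct integers in $C$ become congruent, which is handled above simply by noting that $A+A\supseteq(B+C)\bmod m$ regardless of collisions. The real difficulties of the paper lie elsewhere — in Lemma \ref{lem1} and, for the main theorem, in gluing the ranges together, where a Bertrand/Nagura-type estimate guaranteeing a prime in an interval of the form $(x,\sqrt2\,x]$ is what pins down the numerical threshold $4356$.
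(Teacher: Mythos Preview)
Your proof is correct and follows essentially the same approach as the paper: both use the digit basis $\{0,1,\dots,q\}\cup\{q,2q,\dots,q^2\}$ and bound $\sigma_A(n)$ trivially by $|A|$. The only difference is cosmetic --- the paper fixes $q=66$ for all $m\le 4356$ (getting $|A|\le 132$), whereas you take $q=\lceil\sqrt m\rceil$ and record the general inequality $R_m\le 2\lceil\sqrt m\rceil-1$, which here gives $131$; your parenthetical guess that the gluing step uses primes in $(x,\sqrt2\,x]$ is slightly off (the paper actually needs the tighter interval $(x,2x/\sqrt3\,]$), but that is outside the scope of this lemma.
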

\begin{proof}
For $m\le 4356$, let 
$$A=\{0,1,2,\cdots,66,2\cdot 66,3\cdot66,\cdots,66\cdot66\}.$$
Then $\sigma_A(n)\ge 1$ for any $n\in \mathbb{Z}_{m}$ by the Euclidean division. It is clear that $$\sigma_A(n)\le |A|\le 132.$$

This completes the proof of Lemma \ref{lem2}.
\end{proof}

\begin{lemma}\cite[Lemma 4]{Chen1}\label{lem3}
For any real number $x\ge 33$, there exists at least one prime in the interval $\left(x,\frac{2x}{\sqrt{3}}\right]$.
\end{lemma}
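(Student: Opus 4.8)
The plan is to split the range of $x$ into a ``large'' part, handled by an explicit form of the prime number theorem, and a ``small'' part, handled by a finite computation. For the small part I would first reduce to a discrete statement: it suffices to exhibit an increasing list of primes $37 = q_0 < q_1 < \cdots < q_N$ with $q_{j+1} \le \tfrac{2}{\sqrt 3}\,q_j$ for $0 \le j < N$, and to deal separately with all $x \ge \tfrac{\sqrt 3}{2}\,q_N$. Indeed, given $x$ with $33 \le x < \tfrac{\sqrt 3}{2}\,q_N$, let $j$ be the least index with $q_j > x$ (this exists since $q_N > x$). If $j = 0$ then $x < 37$ and $\tfrac{2}{\sqrt 3}\,x \ge \tfrac{66}{\sqrt 3} > 37 = q_0$, so $q_0 \in (x, \tfrac{2}{\sqrt 3}\,x]$. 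If $j \ge 1$ then $q_{j-1} \le x < q_j \le \tfrac{2}{\sqrt 3}\,q_{j-1} \le \tfrac{2}{\sqrt 3}\,x$, so $q_j$ lies in $(x, \tfrac{2}{\sqrt 3}\,x]$. Taking the $q_j$ to be the consecutive primes starting at $37$, the chain condition is just a finite list of inequalities $p' \le \tfrac{2}{\sqrt 3}\,p$ for consecutive primes $p < p'$ up to $q_N$, which is immediate since the prime gaps in that range are far smaller than $(\tfrac{2}{\sqrt 3} - 1)p = 0.1547\ldots\,p$.

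For the large part I would invoke an explicit estimate for $\pi(t)$ — for instance the Rosser--Schoenfeld and Dusart inequalities of the shape $\pi(t) > \tfrac{t}{\ln t}\bigl(1 + \tfrac{1}{\ln t}\bigr)$ (valid for $t$ past an explicit bound) and $\pi(t) < \tfrac{t}{\ln t}\bigl(1 + \tfrac{1}{\ln t} + \tfrac{2.51}{\ln^2 t}\bigr)$ — or, more directly, a short-interval result such as Dusart's theorem that $\bigl(t,\, t(1 + \tfrac{1}{2\ln^2 t})\bigr]$ contains a prime for every $t \ge 3275$. Since $\tfrac{2}{\sqrt 3} - 1 = 0.1547\ldots$ comfortably exceeds the (shrinking) relative length supplied by such results, one gets, for every $x$ beyond an explicit threshold $x_0$, a prime in $(x, \tfrac{2}{\sqrt 3}\,x]$; it then remains only to choose $q_N$ above to be a prime with $\tfrac{\sqrt 3}{2}\,q_N \ge x_0$. (With Dusart's short-interval statement one may take $x_0 = 3275$, so $q_N$ is a prime just above $\tfrac{2}{\sqrt 3}\cdot 3275 \approx 3782$, and the finite verification of the chain condition is genuinely short.)

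The step I expect to be the real obstacle is the large part: the ratio $\tfrac{2}{\sqrt 3} \approx 1.1547$ is close enough to $1$ that the crudest Chebyshev-type bounds — those with leading constant $1.25506$ — do \emph{not} settle it, so one genuinely needs a sharper effective $\pi(t)$-estimate. The delicate point is to balance the strength of the analytic input against the threshold $x_0$ it produces, so that the subsequent finite prime-chain verification stays manageable.
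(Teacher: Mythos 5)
Your proposal is correct and follows essentially the same route as the paper: an explicit effective prime-counting estimate disposes of all large $x$, and the remaining bounded range of $x$ is handled by a finite verification. The paper uses Panaitopol's explicit bounds for $\pi(x)$ to cover $x>1242$ and checks $33\le x\le 1242$ by computer, while you invoke Dusart's short-interval theorem (or Rosser--Schoenfeld type bounds) with threshold $x_0=3275$ and organize the finite part as a chain of consecutive primes with ratio at most $2/\sqrt{3}$ --- the same decomposition with interchangeable explicit inputs.
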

\begin{proof}
Let $\pi(x)$ be the number of primes not exceeding $x$. We know, from the results of Panaitopol \cite{Panaitopol}, that
$$\pi(x)<\frac{x}{\log x-1-(\log x)^{-0.5}} \quad (x\ge 6)$$
and
$$\pi(x)<\frac{x}{\log x-1+(\log x)^{-0.5}} \quad (x\ge 59).$$
We first show that
$$\pi\left(\frac{2x}{\sqrt{3}}\right)-\pi(x)>\frac{\frac{2x}{\sqrt{3}}}{\log \frac{2x}{\sqrt{3}}-1+\left(\log \frac{2x}{\sqrt{3}}\right)^{-0.5}}-\frac{x}{\log x-1-(\log x)^{-0.5}}\ge0$$
for $x>1242$. By simple rearrangements of the inequality, it suffices to prove that
$$\left(1-\frac{\sqrt{3}}{2}\right)\log x\ge 1-\frac{\sqrt{3}}{2}+\frac{\sqrt{3}}{2}\log \frac{2}{\sqrt{3}}+(\log x)^{-0.5}+\frac{\sqrt{3}}{2}\left(\log \frac{2x}{\sqrt{3}}\right)^{-0.5}.$$
Checking by the computer, we know that it does satisfy for $x=1242$, and hence for $x>1242$.
For $33\le x\le1242$, our lemma can again be easily checked via help of the computer.

This completes the proof of Lemma \ref{lem3}.
\end{proof}

\begin{lemma}\label{lem4}
Let $m_1$ and $m_2$ be two positive integers with $\frac{3}{2}m_1\le m_2< 2m_1$. Then $$R_{m_2}\le 4R_{m_1}.$$
\end{lemma}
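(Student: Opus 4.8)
\emph{Setup.} The plan is to bootstrap a witness for $R_{m_1}$ into one for $R_{m_2}$ by gluing together two translated copies of it. Fix $A\subseteq\Z_{m_1}$ with $1\le\sigma_A(n)\le R_{m_1}$ for all $n\in\Z_{m_1}$, and let $\tilde A\subseteq\{0,1,\dots,m_1-1\}$ be the set of least nonnegative residues of the elements of $A$. Put $t=m_2-m_1$; the hypothesis $\tfrac32 m_1\le m_2<2m_1$ says exactly that $\tfrac12 m_1\le t<m_1$. I would then define
$$B=\tilde A\cup(\tilde A+t)$$
and regard it as a subset of $\Z_{m_2}$ via the identification $\Z_{m_2}=\{0,1,\dots,m_2-1\}$; this is legitimate since $\tilde A\subseteq\{0,\dots,m_1-1\}$ and $\tilde A+t\subseteq\{t,\dots,m_2-1\}$. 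The whole matter reduces to proving $1\le\sigma_B(n)\le 4R_{m_1}$ for every $n\in\Z_{m_2}$.

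\emph{Lower bound.} The key remark is that $\tilde A+\tilde A\subseteq\{0,1,\dots,2m_1-2\}$, so for each residue $r$ modulo $m_1$ there exist $c_1,c_2\in\tilde A$ with $c_1+c_2\in\{r,\,r+m_1\}$ (as integers). Given $n\in\{0,\dots,m_2-1\}$: if $n\le m_1-1$, apply this with $r=n$ --- either $c_1+c_2=n$, so $n\in B+B$ directly, or $c_1+c_2=n+m_1$, so $(c_1+t)+c_2=n+m_2\equiv n\pmod{m_2}$. If $m_1\le n\le m_2-1$, then $n-t\in\{0,\dots,m_1-1\}$; apply it with $r=n-t$ --- either $c_1+c_2=n-t$, giving $(c_1+t)+c_2=n$, or $c_1+c_2=n-t+m_1$, giving $(c_1+t)+(c_2+t)=n+m_2\equiv n\pmod{m_2}$. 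Hence $\sigma_B(n)\ge1$ in every case.

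\emph{Upper bound.} Set $g(v)=\#\{(c_1,c_2)\in\tilde A^2:c_1+c_2=v\}$, so that $g(v)=0$ unless $0\le v\le 2m_1-2$ and $g(v)+g(v+m_1)\le\sigma_A(v\bmod m_1)\le R_{m_1}$ for every $v$. Splitting a solution $b+b'\equiv n\pmod{m_2}$ with $b,b'\in B$ according to which of $\tilde A,\tilde A+t$ contains $b$ and $b'$, and noting that each resulting integer sum lies in $[0,2m_2-2]$ and hence equals $n$ or $n+m_2$, I would derive
$$\sigma_B(n)\le\bigl(g(n)+g(n+m_1)\bigr)+\bigl(g(n-t)+g(n+m_1-t)\bigr)+\bigl(g(n+m_2)+g(n-t)+g(n+m_1)+g(n-2t)\bigr).$$
The first two parenthesised sums are each $\le R_{m_1}$ (the first is part of $\sigma_A(n\bmod m_1)$, the second part of $\sigma_A((n-t)\bmod m_1)$). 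For the last group each $g$-value is $\le R_{m_1}$, and I claim at most two of the four terms can be nonzero: indeed $g(n+m_2)\ne0\Rightarrow n\le m_1-t-2$, $g(n+m_1)\ne0\Rightarrow n\le m_1-2$, $g(n-t)\ne0\Rightarrow n\ge t$, $g(n-2t)\ne0\Rightarrow n\ge 2t$, and since $\tfrac12 m_1\le t<m_1$ one has $m_1-t-2<t$ and $m_1-2<2t$, so the pairs $\{g(n+m_2),g(n-t)\}$, $\{g(n+m_2),g(n-2t)\}$, $\{g(n+m_1),g(n-2t)\}$ can never be simultaneously nonzero; a short check then rules out any three of the four being nonzero at once. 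Thus the last group is $\le 2R_{m_1}$, so $\sigma_B(n)\le 4R_{m_1}$, and together with the lower bound this yields $R_{m_2}\le 4R_{m_1}$.

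\emph{Main obstacle.} The delicate point is precisely the last step: a naive bookkeeping of the four placement patterns of $(b,b')$ only gives $\sigma_B(n)\le 8R_{m_1}$. Getting down to $4R_{m_1}$ requires both folding each $g(v)$ together with $g(v+m_1)$ into a single $\sigma_A$-value and exploiting that the surviving terms have essentially disjoint supports in $n$ --- which is exactly where the condition $t\ge m_1/2$ is used. One must carefully track, for each pattern, which two integer translates of $n$ (namely $n$ or $n+m_2$, after subtracting $0$, $t$, or $2t$) are actually attainable; the boundary cases $t=m_1-1$ and small $m_1$ should be checked but present no genuine difficulty.
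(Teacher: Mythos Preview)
Your proof is correct and follows essentially the same approach as the paper: the same construction $B=\tilde A\cup(\tilde A+t)$, the same lower-bound verification, and the same key idea for the upper bound, namely that the condition $t\ge m_1/2$ forces certain pairs among the six integer translates to have disjoint support in $n$. The only cosmetic difference is that the paper organises the upper bound as a three-case split on whether $g(n-2t)$ or $g(n+m_1+t)$ vanishes, whereas you pair off $g(n)+g(n+m_1)$ and $g(n-t)+g(n+m_1-t)$ first and then argue that at most two of the remaining four terms survive; both routes encode the same incompatibilities and yield the same $4R_{m_1}$.
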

\begin{proof}
We follow the proof of Chen \cite[Lemma 3]{Chen1} with some refinements. Let $A\subseteq \mathbb{Z}_{m_1}$ be the one satisfying that 
$$1\le\sigma_{A}(n)\le R_{m_1}$$
for any $n\in \mathbb{Z}_{m_1}$. We make the provision that if $a\in A$, then $0\le a\le m_1-1$. Suppose that $m_2=m_1+r$, then we have $\frac{1}{2}m_1\le r< m_1$. Let
$$B=A\cup \{a+r:a\in A\}.$$

We first claim that $\sigma_{B}(n)\ge 1$ for any $n\in \mathbb{Z}_{m_2}$. In fact, for $0\le n\le m_1-1$, there exist $a_1,a_2\in A$ such that $n\equiv a_1+a_2\pmod{m_1}$, which means that $$n=a_1+a_2 \quad \text{or} \quad n+m_1=a_1+a_2$$ since $a_1+a_2\le 2m_1-2<2m_1$.
If $n=a_1+a_2$, then $n\equiv a_1+a_2\pmod{m_2}$. Otherwise, we have $n+m_1=a_1+a_2$, then $n+m_2=a_1+(a_2+r)$, from which we deduce that $n\equiv a_1+(a_2+r)\pmod{m_2}$. Thus, we have shown that $\sigma_{B}(n)\ge 1$ for any $0\le n\le m_1-1$. We now consider the case $m_1\le n\le m_2-1$. In this case, we have $$0< m_1-r\le n-r\le m_1-1.$$
Hence, there exist $a_1',a_2'\in A$ such that $n-r\equiv a_1'+a_2'\pmod{m_1}$, which implies that
$$n-r=a_1'+a_2' \quad \text{or} \quad n-r+m_1=a_1'+a_2'$$
again by the fact that $a_1'+a_2'<2m_1$. If $n-r=a_1'+a_2'$, then $n\equiv a_1'+(a_2'+r)\pmod{m_2}$. So, we can now assume that $n-r+m_1=a_1'+a_2'$, from which we deduce that $n+m_2=(a_1'+r)+(a_2'+r)$, i.e., $n\equiv (a_1'+r)+(a_2'+r)\pmod{m_2}$. Therefore, we have $\sigma_{B}(n)\ge 1$ for any $m_1\le n\le m_2-1$ too.

In the following, we prove that $\sigma_{B}(n)\le 4R_{m_1}$ for any $n\in \mathbb{Z}_{m_2}$. For any $n\in \mathbb{Z}_{m_2}$ with $0\le n\le m_2-1$, we can assume that
\begin{equation}\label{eq1}
n\equiv b_1+b_2\pmod{m_2}, \quad (b_1,b_2\in B).
\end{equation}
From Eq. (\ref{eq1}) we have 
\begin{equation}\label{eq2}
n+km_2=b_1+b_2, \quad b_1,b_2\in B
\end{equation}
for some nonnegative integers $k$. Since $b_1+b_2\le 2m_2-2<2m_2$, we know that $k=0$ or $1$. Thus, we can conclude that
\begin{equation}\label{eq3}
\sigma_{B}(n)\le \#\{(b_1,b_2):n=b_1+b_2\}+\#\{(b_1,b_2):n+m_2=b_1+b_2\},
\end{equation}
where the $b's$ belong to $B$ and $a's$ belong to $A$, respectively, from now on. By the definition of $B$ we have
\begin{align}\label{eq4}
\#\{(b_1,b_2):n=b_1+b_2\}\le&\#\{(a_1,a_2):n=a_1+a_2\}+2\#\{(a_1,a_2):n-r=a_1+a_2\}\nonumber\\
\quad &+\#\{(a_1,a_2):n-2r=a_1+a_2\}
\end{align}
and 
\begin{align}\label{eq5}
\#&\{(b_1,b_2):n+m_2=b_1+b_2\}\le\#\{(a_1,a_2):n+m_2=a_1+a_2\}\nonumber\\
\quad &+2\#\{(a_1,a_2):n+m_2-r=a_1+a_2\}+\#\{(a_1,a_2):n+m_2-2r=a_1+a_2\}\nonumber\\
&\quad \quad \quad\quad  \quad \quad \quad \quad \quad \quad \quad \quad 
\le\#\{(a_1,a_2):n+m_1+r=a_1+a_2\}\nonumber\\
\quad &+2\#\{(a_1,a_2):n+m_1=a_1+a_2\}+\#\{(a_1,a_2):n+m_1-r=a_1+a_2\}.
\end{align}
Below, we shall illustrate that there are some disjoint solutions in Eqs. (\ref{eq4}) and (\ref{eq5}) which would be the main novelty of this note. It will be separated into the following three cases.

{\bf Case I.} $\#\{(a_1,a_2):n-2r=a_1+a_2\}\neq0$. In this case, we have $n\ge 2r\ge m_1.$
It then follows that
$$n+m_1+r>n+m_1\ge 2m_1,$$
from which we deduce that
\begin{align}\label{eq6}
\#\{(a_1,a_2):n+m_1=a_1+a_2\}=\#\{(a_1,a_2):n+m_1+r=a_1+a_2\}=0
\end{align}
since $a_1+a_2\le 2(m_1-1)<2m_1$.
Hence, by Eqs (\ref{eq4}), (\ref{eq5}) and (\ref{eq6}) we obtain that
\begin{align*}
\sigma_{B}(n)\le \#&\{(a_1,a_2):n=a_1+a_2\}\nonumber\\
\quad &+2\#\{(a_1,a_2):n-r=a_1+a_2 \quad \text{or} \quad n+m_1-r=a_1+a_2\}
\nonumber\\
\quad &+\#\{(a_1,a_2):n-2r=a_1+a_2\}\nonumber\\
\le \#&\{(a_1,a_2):n\equiv a_1+a_2 \pmod{m_1}\}\nonumber\\
\quad &+2\#\{(a_1,a_2):n-r\equiv a_1+a_2 \pmod{m_1}\}
\nonumber\\
\quad &+\#\{(a_1,a_2):n-2r\equiv a_1+a_2 \pmod{m_1}\}\nonumber\\
\le 4&R_{m_1}.
\end{align*}

{\bf Case II.} $\#\{(a_1,a_2):n+m_1+r=a_1+a_2\}\neq0$. In this case, we have
$$n+m_1+r\le 2(m_1-1) \quad \text{or} \quad n\le m_1-2-r<\frac{1}{2}m_1$$
since $r\ge \frac{1}{2}m_1$. Then it follows that
$$n-2r<n-r<0,$$
from which we deduce that
\begin{align}\label{eq7}
\#\{(a_1,a_2):n-2r=a_1+a_2\}=\#\{(a_1,a_2):n-r=a_1+a_2\}=0.
\end{align}
Hence, by Eqs (\ref{eq4}), (\ref{eq5}) and (\ref{eq7}) we obtain that
\begin{align*}
\sigma_{B}(n)\le 2\#&\{(a_1,a_2):n=a_1+a_2  \quad \text{or} \quad n+m_1=a_1+a_2\}\nonumber\\
\quad &+\#\{(a_1,a_2):n+m_1+r=a_1+a_2\}
\nonumber\\
\quad &+\#\{(a_1,a_2):n+m_1-r=a_1+a_2\}\nonumber\\
\le 2\#&\{(a_1,a_2):n\equiv a_1+a_2 \pmod{m_1}\}\nonumber\\
\quad &+\#\{(a_1,a_2):n+r\equiv a_1+a_2 \pmod{m_1}\}
\nonumber\\
\quad &+\#\{(a_1,a_2):n-r\equiv a_1+a_2 \pmod{m_1}\}\nonumber\\
\le 4&R_{m_1}.
\end{align*}

{\bf Case III.} $\#\{(a_1,a_2):n-2r=a_1+a_2\}=\#\{(a_1,a_2):n+m_1+r=a_1+a_2\}=0$. In this last case, we clearly have
\begin{align*}
\sigma_{B}(n)\le 2\#&\{(a_1,a_2):n=a_1+a_2  \quad \text{or} \quad n+m_1=a_1+a_2\}\nonumber\\
\quad &+2\#\{(a_1,a_2):n-r=a_1+a_2  \quad \text{or} \quad n+m_1-r=a_1+a_2\}
\nonumber\\
\le 2\#&\{(a_1,a_2):n\equiv a_1+a_2 \pmod{m_1}\}\nonumber\\
\quad &+2\#\{(a_1,a_2):n-r\equiv a_1+a_2 \pmod{m_1}\}\nonumber\\
\le 4&R_{m_1}.
\end{align*}

This completes the proof of Lemma \ref{lem4}.
\end{proof}

\begin{remark} In \cite[Lemma 3]{Chen1}, Chen showed that $R_{m_1}\le 6R_{m_2}$ under the condition that $$m_1<m_2\le \frac{3}{2}m_1.$$ The improvement of the upper bound on the Ruzsa number comes from our refined relation between $R_{m_1}$ and $R_{m_2}$. The idea for this refined relation relies on the following two points. Firstly, we make the observation that if we use the basis of $\mathbb{Z}_{m_1}$ to generate a basis of $\mathbb{Z}_{m_2}$, then the parameter $k$ in Eq. (\ref{eq2}) can only be 0 or 1. Whereas, if we use the basis of $\mathbb{Z}_{m_2}$ to generate a basis of $\mathbb{Z}_{m_1}$, like the ones taken by Chen, then the corresponding parameter $k$ in Eq. (\ref{eq2}) could be 0 or 1 or 2, which makes 
relation between $R_{m_1}$ and $R_{m_2}$ less clean and involved.
Basing on this new construction, we introduce a new correlation of $m_1$ and $m_2$ (i.e., $\frac{3}{2}m_1\le m_2<2m_1$) to form some disjoint solutions to $\sigma_B(n)$ which certainly would be the key ingredient of our note.
\end{remark}

Now, let's turn to the proof of Theorem \ref{thm1}.
\begin{proof}[Proof of Theorem \ref{thm1}]
By Lemma \ref{lem2}, we only need to consider $m> 4356$.
By this condition we shall have $\sqrt{m/4}> 33$. By Lemma \ref{lem3}, there exists a prime $p$ such that
$$\sqrt{\frac{m}{4}}< p\le\sqrt{\frac{m}{3}}.$$
In other words, we have $$\frac{3}{2}\cdot2p^2=3p^2\le m< 4p^2=2\cdot2p^2.$$ Therefore, by Lemma \ref{lem1} and \ref{lem4} it follows that
$$R_{m}\le 4R_{2p^2}\le 192.$$

This completes the proof of Theorem \ref{thm1}.
\end{proof}

\section{Some related conjectures}
In \cite{Chen1}, Chen asked that whether there are infinitely many odd (resp. even) numbers in the sequence $\{R_m\}_{m=1}^{\infty}$. He called it a basic problem on Ruzsa's number. To end up this note, we pose a few conjectures which seems to be of some interests.

\begin{conjecture}\label{conjecture1}
For any positive integer $m$, we have $\left|R_{m+1}-R_m\right|\le 1.$
\end{conjecture}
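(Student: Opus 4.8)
The plan is to split the conjecture into the two one-sided bounds $R_{m+1}\le R_m+1$ and $R_m\le R_{m+1}+1$, and to prove each by transferring an extremal set from one modulus to the other. I would use throughout that an extremal set is sparse: if $1\le\sigma_A(n)\le R_m$ for all $n\in\mathbb{Z}_m$, then $|A|^2=\sum_{n\in\mathbb{Z}_m}\sigma_A(n)\le mR_m$, so $|A|\le\sqrt{mR_m}$; hence $A$ avoids a cyclic interval of length $L\ge m/|A|-1\ge\sqrt{m/R_m}-1$, and after translating (which replaces $\sigma_A(n)$ by $\sigma_A(n-2t)$ and so changes neither the maximum nor the minimum) we may assume, lifting to integers, that $A\subseteq\{0,1,\ldots,W\}$ with $W=m-1-L$.

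For $R_{m+1}\le R_m+1$, take such an $A$ and put $B=A$, now viewed inside $\mathbb{Z}_{m+1}$. Writing $r_A(s)=\#\{(a_1,a_2)\in A^2:a_1+a_2=s\}$, every sum $a_1+a_2$ lies in $\{0,\ldots,2W\}\subseteq\{0,\ldots,2m-2\}$, so at most one carry can occur modulo $m$ and at most one modulo $m+1$; hence for $0\le n\le m$,
\[
\sigma_B^{(m+1)}(n)=r_A(n)+r_A(n+m+1),\qquad \sigma_A^{(m)}(n)=r_A(n)+r_A(n+m),
\]
with the convention $r_A(s)=0$ outside $\{0,\ldots,2W\}$. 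Since $r_A(n)\ge1$ already gives $\sigma_B^{(m+1)}(n)\ge1$, the set $B$ can fail to be a basis of $\mathbb{Z}_{m+1}$ only at residues $n$ with $r_A(n)=0$; for such $n$ we have $\sigma_A^{(m)}(n)=r_A(n+m)\ge1$, so $n$ was represented in $\mathbb{Z}_m$ purely through a carry. I would repair these finitely many defect residues by a bounded surgery on $A$ — adjoining, or swapping in, a few elements $c$ chosen from the avoided interval near $W$ (equivalently near the residue $-1$ in $\mathbb{Z}_{m+1}$), so that such $c$ affect $\sigma_B^{(m+1)}$ only at residues $c+a$ with $a$ small, i.e.\ precisely among the under-represented residues — the point requiring care being that the surgery must not raise any $\sigma_B^{(m+1)}(n)$ above $R_m+1$. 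The reverse inequality $R_m\le R_{m+1}+1$ is obtained in the same way, lifting an extremal set for $\mathbb{Z}_{m+1}$ and reducing modulo $m$.

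The genuine obstacle is the upper bound after transfer, i.e.\ $r_A(n)+r_A(n+m+1)\le R_m+1$. Combining the two identities gives, for the residues $n$ not already trivially controlled,
\[
\sigma_B^{(m+1)}(n)=r_A(n)+r_A(n+m+1)=\sigma_A^{(m)}(n)+\sigma_A^{(m)}(n+1)-r_A(n+1)-r_A(n+m),
\]
so the naive bound is only $\sigma_B^{(m+1)}(n)\le 2R_m$, and getting $\sigma_B^{(m+1)}(n)\le R_m+1$ amounts to showing that $r_A(n+1)+r_A(n+m)\ge R_m-1$ at every residue where $\sigma_B^{(m+1)}(n)$ is near its maximum — equivalently, that an extremal set for $\mathbb{Z}_m$ cannot be simultaneously abundant on the low sums $a_1+a_2=n$ and on the high sums $a_1+a_2=n+m+1$. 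Here one should exploit the flexibility in the window $W$ and in the translation — with $A\subseteq\{0,\ldots,W\}$ the low sums involve only elements $\le n$ while the high sums involve only elements $\ge n+L+2$, so the two families of pairs are built from disjoint parts of $A$ — together with finer structural information about extremal configurations. I expect this step to be the crux, and it is plausibly why the statement is still only a conjecture. As intermediate goals one could first establish the weaker bound $|R_{m+1}-R_m|\le C$ with $C$ smaller than the factor $4$ lost in Lemma~\ref{lem4}, verify the conjecture by computer for small $m$ (feasible since $R_m\le 192$), and settle it unconditionally for moduli such as $2p^2$ and the Ruzsa-type values for which the extremal or near-extremal sets are explicit.
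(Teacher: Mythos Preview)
There is nothing to compare against: in the paper this statement is posed as an open \emph{conjecture}, not a theorem. The only supporting evidence the authors give is that it holds for $m\le 34$ by the table in S\'andor--Yang, together with the remark ``Perhaps, this conjecture is not really difficult.'' No proof, sketch, or strategy is offered.

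Your proposal is therefore not a proof attempt to be checked but a research outline, and you yourself flag the essential gap. Two places in the argument are genuinely incomplete. First, the ``bounded surgery'' step is only a hope: you have not specified which elements to adjoin, nor shown that adjoining them covers all defect residues while raising no $\sigma_B^{(m+1)}(n)$ past $R_m+1$; since each new element $c$ increases $\sigma_B^{(m+1)}$ at $|A|$ many residues $c+a$, and you have no control on how large $\sigma_B^{(m+1)}$ already is at those residues, this is not a detail. Second --- and this is the real obstruction, as you say --- even \emph{before} any surgery the bound $r_A(n)+r_A(n+m+1)\le R_m+1$ is unproven. Your identity only gives $\sigma_B^{(m+1)}(n)\le 2R_m$, and the observation that the pairs contributing to $r_A(n)$ and to $r_A(n+m+1)$ come from elements on opposite sides of the gap does not, by itself, prevent both counts from being close to $R_m$ simultaneously: an extremal $A$ need not spread its representations evenly, and nothing in the definition of $R_m$ forbids a single residue $n$ with $r_A(n)$ near $R_m$ together with $r_A(n+m+1)$ also large. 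In short, the proposal is a plausible line of attack and correctly isolates the difficulty, but it does not constitute a proof, and the paper makes no claim that one exists.
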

For $m\le 34$, Conjecture \ref{conjecture1} is true by the table in the Appendix of the article of S\'andor and Yang \cite{SandorYang}. 
Perhaps, this conjecture is not really difficult. 

\begin{conjecture}[Chen]\label{conjecture2}
We have $R_{m+1}=R_m$ for infinitely many positive integers $m$.
\end{conjecture}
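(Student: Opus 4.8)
The starting point is that the range of the sequence $\{R_m\}$ is finite: by Theorem~\ref{thm1} we have $R_m\le 192$ for every $m$, and (by the lower bound of S\'{a}ndor and Yang \cite{SandorYang}) $R_m\ge 6$ once $m\ge 36$. Hence some value $v\in\{6,7,\dots,192\}$ is attained by $R_m$ for infinitely many $m$, say at $m_1<m_2<\cdots$. If $m_{i+1}=m_i+1$ for even a single $i$ we are done with $m=m_i$; running this remark over every value $v$, the conjecture is \emph{equivalent} to the assertion that the inequality $R_{m+1}\ne R_m$ cannot hold for all sufficiently large $m$. So the plan is to assume $R_{m+1}\ne R_m$ for all $m\ge M$ and force a contradiction.

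First I would record the one fact that comes for free from boundedness: telescoping, $\sum_{m=36}^{M-1}(R_{m+1}-R_m)=R_{M}-R_{36}$ stays in a fixed bounded interval, so the differences $R_{m+1}-R_m$ are neither eventually all positive nor eventually all negative; thus $R_{m+1}\le R_m$ infinitely often and $R_{m+1}\ge R_m$ infinitely often, but this produces no pair with actual equality (the oscillation $\dots,6,7,6,7,\dots$ is ruled out by none of the above). To manufacture a genuine equality the natural device is a ``$+1$'' companion of Lemma~\ref{lem4}: a construction converting a set $A\subseteq\mathbb{Z}_m$ with $1\le\sigma_A(n)\le R_m$ into a set $B\subseteq\mathbb{Z}_{m+1}$ with $1\le\sigma_B(n)\le R_m$ (giving $R_{m+1}\le R_m$), together with one going the other way (giving $R_m\le R_{m+1}$); if both can be arranged along a single infinite set of $m$ then $R_{m+1}=R_m$ there. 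It seems most promising to try this first on a rigid family such as $m=2p^2$, where Lemma~\ref{lem1} already pins $R_m$ inside a small window, or on the near-optimal interval-plus-progression sets used in the proof of Lemma~\ref{lem2}.

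The hard part --- and the reason the statement is still only a conjecture --- is precisely this transference, which at present has no exact form. The sole transfer tool in hand, Lemma~\ref{lem4}, does not even apply to a pair of consecutive moduli (its hypothesis $\frac{3}{2} m_1\le m_2<2m_1$ already fails for $m_2=m_1+1$ when $m_1\ge 3$), and where it does apply it loses a factor of $4$; moreover its dilation construction leans essentially on the cyclic structure of the ambient group, which is replaced wholesale when $m$ becomes $m+1$. Even a proof of Conjecture~\ref{conjecture1} would not by itself close the gap: granting $|R_{m+1}-R_m|\le 1$, a failure of Conjecture~\ref{conjecture2} past some $M$ would force $R_m\bmod 2$ to alternate for every $m\ge M$, and excluding that would amount to showing one of the two parities occurs only finitely often in $\{R_m\}$ --- which is exactly the sort of thing Chen's ``basic problem'' leaves open. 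I therefore expect that any actual proof will have to rest on a new stability principle for the representation function --- some quantitative handle on how an extremal set in $\mathbb{Z}_m$ can be perturbed into one in $\mathbb{Z}_{m+1}$ --- rather than on any assembly of the lemmas above; short of that, the most one can realistically salvage from the present circle of ideas is an improved upper bound for $\liminf_{m\to\infty}|R_{m+1}-R_m|$.
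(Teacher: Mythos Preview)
The paper does not prove this statement at all: Conjecture~\ref{conjecture2} is posed as an open problem, and the only commentary the paper offers is the single sentence that it ``might be somewhat easier to reach'' than Conjecture~\ref{conjecture1}. There is therefore no proof in the paper to compare your proposal against.

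Your writeup, for its part, is also not a proof and does not claim to be one --- you label it ``a possible approach'' and explicitly conclude that the existing lemmas are insufficient and that a genuinely new idea would be needed. That assessment is accurate. The observations you make along the way are correct: the range of $\{R_m\}$ is contained in a finite set by Theorem~\ref{thm1} and the S\'andor--Yang lower bound, so pigeonhole gives some value hit infinitely often; but, as you note, this does not force two \emph{consecutive} $m$ to share that value, and the bounded-telescoping remark only yields infinitely many sign changes of $R_{m+1}-R_m$, not an actual zero. Your point that Lemma~\ref{lem4} is inapplicable to the pair $(m,m+1)$ once $m\ge 3$, and that even where it applies it loses a factor of~$4$, is also right and is precisely why no assembly of the paper's lemmas can touch this conjecture.

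In short: there is no gap to name because you have not asserted a proof, and there is no comparison to make because the paper offers none either. If you intended this as a proof, it is not one; if you intended it as a survey of obstructions, it is a fair one.
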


Comparing with Conjecture \ref{conjecture1}, it might be somewhat easier to reach Conjecture \ref{conjecture2}.

From now on, let $\mathcal{H}_m$ be the set of all subsets $A\subseteq\mathbb{Z}_{m}$ such that 
$\sigma_A(n)\ge 1$
for all $n\in \mathbb{Z}_{m}$. Define the following minimal mean value amount
$$\ell_m=\min_{A\subseteq\mathcal{H}_m}\left\{m^{-1}\sum_{n\in\mathbb{Z}_{m}}\sigma_{A}(n)\right\}.$$

\begin{conjecture}\label{conjecture3}
We have $\liminf_{m\rightarrow\infty}\ell_m\ge 3.$
\end{conjecture}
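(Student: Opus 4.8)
The plan is the following. First I would reformulate: since $\sum_{n\in\mathbb{Z}_m}\sigma_A(n)=|A|^2$, we have $\ell_m=m^{-1}\beta(m)^2$, where $\beta(m)=\min\{|A|:A\in\mathcal{H}_m\}$ is the least cardinality of an additive basis of order two of $\mathbb{Z}_m$; thus Conjecture \ref{conjecture3} is equivalent to the statement that every such basis satisfies $|A|\ge(\sqrt3-o(1))\sqrt m$. The trivial estimate $m=|A+A|\le\binom{|A|+1}{2}$ already gives $|A|\ge(\sqrt2-o(1))\sqrt m$, i.e. $\liminf_m\ell_m\ge2$, so the whole point is to improve the constant from $2$ to $3$.

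Next I would bring in the additive energy $E(A)=\sum_n\sigma_A(n)^2=\sum_{t\in\mathbb{Z}_m}d_A(t)^2$, where $d_A(t)=\#\{(a,b)\in A^2:a-b=t\}$. Since $d_A(0)=|A|$ and $\sum_{t\ne0}d_A(t)=|A|^2-|A|$, Cauchy--Schwarz over the $m-1$ nonzero residues gives, with $c:=|A|^2/m$,
\[
E(A)\ \ge\ |A|^2+\frac{(|A|^2-|A|)^2}{m-1}\ =\ \bigl(c+c^2-o(1)\bigr)m .
\]
In the opposite direction, writing $N_j=\#\{n\in\mathbb{Z}_m:\sigma_A(n)=j\}$, the basis condition forces $N_0=0$, and since $\sigma_A(n)=1$ forces the sole representation to be of the shape $a+a$ we obtain $N_1\le|A|=o(m)$. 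Plugging $N_1=o(m)$ into $\sum_jN_j=m$ and $\sum_jjN_j=cm$ shows that if $c$ is close to $2$ then almost all $n$ satisfy $\sigma_A(n)=2$, so $E(A)$ would be $4m+o(m)$, contradicting $E(A)\ge6m-o(m)$ \emph{unless} the surplus $\sum_{\sigma_A(n)\ge3}\bigl(\sigma_A(n)-2\bigr)$ is essentially concentrated at a single $n_0$ with $\sigma_A(n_0)\approx|A|$. But $\sigma_A(n_0)\approx|A|$ means $A$ agrees with $n_0-A$ up to $o(|A|)$ elements; setting $S=A\cap(n_0-A)$ one gets $S=n_0-S$, $|S|\approx|A|\approx\sqrt{2m}$, and $|S-S|=|S+S|\ge|A+A|-o(m)=m-o(m)$, so $S$ is an approximate $(m,\sqrt{2m},2)$ perfect difference set, i.e. an approximate cyclic biplane.

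I would then rule such configurations out for large $m$ by a Fourier/eigenvalue argument in the spirit of the nonexistence theory for symmetric designs, and similarly dispose of the nearly $t$-periodic bases (those having $d_A(t)\approx|A|$ for some $t\ne0$), thereby obtaining $\liminf_m\ell_m>2$. One would then iterate the same dichotomy at successive thresholds: the near-extremal bases become progressively more rigid, and should eventually be forced to resemble the structured bases of Lemma \ref{lem2} (unions of an arithmetic progression and a dilate), whose mean value is close to $4$; carrying this through would give the constant $3$.

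The step I expect to be the real obstacle is precisely this passage from $2$ to $3$. Every Cauchy--Schwarz or second-moment inequality above is tight for a \emph{hypothetical} near-Sidon basis of size $\sqrt{2m}$, so beating $2$ cannot be done by $L^2$ arguments alone: one needs either a genuine nonexistence theorem for near-perfect bases of $\mathbb{Z}_m$ — which entangles the problem with hard questions on cyclic difference sets and biplanes — or a higher-moment identity (for example controlling $\sum_n\sigma_A(n)^3$, equivalently the additive energy of $A+A$) strong enough to see the obstruction directly. For this reason a sensible preliminary step is to check numerically that $\ell_m$ really does not drop below $3$ for moderate $m$, since the analogous postage-stamp (additive-basis) problem on integer intervals admits surprisingly efficient constructions and it is not a priori obvious that $\liminf_m\ell_m$ equals $3$ rather than some smaller value.
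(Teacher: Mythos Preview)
The statement is labeled a \emph{conjecture} in the paper, and the paper offers no proof of it; the authors only record the known bound $\liminf_{m\to\infty}\ell_m\ge 2$ (citing S\'andor--Yang), the upper bound $\limsup_{m\to\infty}\ell_m\le 192$ coming from Theorem~\ref{thm1}, and the equivalent reformulation $\liminf_{m\to\infty}|A|/\sqrt m\ge\sqrt3$ for bases $A$ of $\mathbb{Z}_m$ --- the same reformulation you derive. So there is nothing in the paper to compare your argument against.

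What you have written is a research plan rather than a proof, and the gap is precisely where you locate it. The second-moment computation is correct and does force a hypothetical basis with $|A|^2/m\to 2$ to be, up to $o(|A|)$ elements, a symmetric set that is simultaneously a near-perfect $(m,k,2)$-difference set. But the sentence ``rule such configurations out for large $m$ by a Fourier/eigenvalue argument in the spirit of the nonexistence theory for symmetric designs'' is not a step one currently knows how to carry out: the nonexistence of cyclic biplanes for large parameters is itself a well-known open problem, and the approximate version you need is no easier. Concretely, if a symmetric cyclic $(m,k,2)$-difference set $A=n_0-A$ existed, one checks directly that $A+A=\mathbb{Z}_m$, $\sigma_A(n_0)=k$, and $\sigma_A(n)=2$ for all $n\ne n_0$, so that $|A|^2/m=k^2/m\to 2$; thus your plan implicitly presupposes the resolution of a problem at least as hard as the conjecture itself. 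The subsequent ``iteration to $3$'' inherits this unresolved step, so the outline does not yield a proof even of $\liminf_{m\to\infty}\ell_m>2$.
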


The philosophy leading to Conjecture \ref{conjecture3} is that we believe that for most integers $n\in \mathbb{Z}_{m}$ there are at least two essentially different representations, providing that $\sigma_A(n)\ge 1$
for all $n\in \mathbb{Z}_{m}$.
This can be viewed as an analogue of the Erd\H os--Tur\'{a}n conjecture in $\mathbb{Z}_{m}$. 
It is clear that we have $$\liminf_{m\rightarrow\infty}\ell_m\ge 2$$
from Lemma 2.2 of \cite{SandorYang}. By Theorem \ref{thm1}, we have 
$$\limsup_{m\rightarrow\infty}\ell_m\le 192.$$
The authors think that any improvement of the bounds involving $\liminf_{m\rightarrow\infty}\ell_m$ and $\limsup_{m\rightarrow\infty}\ell_m$ above would be of interest. Actually, Conjecture \ref{conjecture3} could be interpreted to the following more simple form: Let $A\subseteq\mathbb{Z}_{m}$ be a subset such that $A+A=\mathbb{Z}_{m}$, then
$$\liminf_{m\rightarrow\infty}\frac{|A|}{\sqrt{m}}\ge \sqrt{3}.$$
For positive integer $m$, let $\mathcal{H}^*_m$ be the set of all subsets of $\mathbb{Z}_{m}$ with $1\le \sigma_A(n)\le R_m$ for any $n\in \mathbb{Z}_{m}$.
Let $$\mathcal{K}_m=\min_{A\in \mathcal{H}^*_m}\{\#A\}.$$

\begin{conjecture}\label{conjecture4}
We have $\liminf_{m\rightarrow\infty}\mathcal{K}_m/\sqrt{3m}\ge 1.$
\end{conjecture}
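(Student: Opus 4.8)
The plan is to deduce Conjecture~\ref{conjecture4} from a strengthening of the elementary inequality behind Conjecture~\ref{conjecture3}; in particular the hypothesis $\sigma_A(n)\le R_m$ plays no essential role here, as it only furnishes the \emph{upper} bound $\#A\le(R_m m)^{1/2}$. Since every $A\in\mathcal H^*_m$ lies in $\mathcal H_m$, we have $\mathcal K_m\ge b_m:=\min_{A\in\mathcal H_m}\#A$; since $\sum_{n\in\mathbb{Z}_m}\sigma_A(n)=(\#A)^2$ for every finite set $A$, we have $\ell_m=b_m^2/m$; and a basis of the type used in Lemma~\ref{lem2} shows $b_m=O(\sqrt m)$. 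Hence
\[
\liminf_{m\to\infty}\frac{\mathcal K_m}{\sqrt{3m}}\ \ge\ \liminf_{m\to\infty}\frac{b_m}{\sqrt{3m}}\ =\ \Bigl(\tfrac13\,\liminf_{m\to\infty}\ell_m\Bigr)^{1/2},
\]
and it suffices to prove $\liminf_{m\to\infty}\ell_m\ge3$, i.e.\ that for every size-minimal basis $A$ of $\mathbb{Z}_m$ (which automatically has $\#A=O(\sqrt m)$) one has $(\#A)^2\ge3m-o(m)$ as $m\to\infty$.

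The second step converts this into a statement about the representation function, exploiting a parity feature special to sums. Write $\sigma_A(n)=2r_A(n)+d_A(n)$, where $r_A(n)=\#\{\{a,b\}\subseteq A:a\ne b,\ a+b=n\}$ and $d_A(n)=\#\{a\in A:2a=n\}$. Since $\#\{n:d_A(n)\ge1\}\le\#(2A)\le\#A=O(\sqrt m)$, all but $o(m)$ of the residues $n$ satisfy $\sigma_A(n)=2r_A(n)$, a \emph{positive even} integer (positive because $A$ is a basis). Splitting $(\#A)^2=\sum_n\sigma_A(n)$ according to whether $\sigma_A(n)=2$ or $\sigma_A(n)\ge4$ gives $(\#A)^2\ge 4m-2u-o(m)$, where $u:=\#\{n:\sigma_A(n)=2\}=\#\{n:r_A(n)=1\}+o(m)$. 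Therefore $\liminf_{m\to\infty}\ell_m\ge3$ --- and so Conjecture~\ref{conjecture4} --- would follow from
\[
\#\{n\in\mathbb{Z}_m:r_A(n)=1\}\ \le\ \tfrac12 m+o(m)
\]
for every size-minimal basis $A$ of $\mathbb{Z}_m$: \emph{at most half the residues of $\mathbb{Z}_m$ may have a unique nontrivial representation as a sum from $A$.}

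To attack this, the natural tool is the additive energy $E(A)=\sum_{n\in\mathbb{Z}_m}\sigma_A(n)^2=\tfrac1m\sum_{\xi\in\mathbb{Z}_m}\bigl|\widehat{\mathbf 1_A}(\xi)\bigr|^4$. Two elementary inputs are available. First, $\binom{\#A}{2}=\sum_n r_A(n)\ge\#\{n:r_A(n)\ge1\}=m-o(m)$ recovers the known bound $\liminf_{m\to\infty}\ell_m\ge2$. Second, writing $\rho_A(t)=\#\{(a,b)\in A^2:a-b=t\}$ we have $E(A)=\sum_{t\in\mathbb{Z}_m}\rho_A(t)^2$, and since $A-A$ occupies at most $m$ residues, Cauchy--Schwarz applied on $\{t\ne0\}$ yields $E(A)\ge(c^2+c)m-o(m)$ with $c:=(\#A)^2/m$. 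The obstruction is that the only upper bound one has for $E(A)$, namely $E(A)\le R_m\sum_n\sigma_A(n)=R_m\,c\,m$, combines with this to give merely the \emph{upper} bound $c\le R_m-1+o(1)$, rather than the lower bound on $c$ that Conjecture~\ref{conjecture3} requires: a large representation function forces large energy, which is the wrong direction.

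Consequently, the crux --- the step I expect to be the main obstacle --- is to produce a genuine lower bound of order $m$ for $\#\{n:r_A(n)\ge2\}$, i.e.\ to show that a basis of $\mathbb{Z}_m$ must be quantitatively \emph{far} from having all of its nontrivial sums distinct. The qualitative reason this should hold is that a set $A$ with $a+b=c+d\Rightarrow\{a,b\}=\{c,d\}$ (a $B_2^+$ set) is automatically a Sidon set, hence has at most $\sqrt m+O(1)$ elements, whereas a basis of $\mathbb{Z}_m$ has at least $\sqrt m$ elements; but making this quantitative enough to reach the precise constant $\tfrac12$ (equivalently, the value $3$) is, as the authors note, essentially the $\mathbb{Z}_m$ incarnation of the Erd\H os--Tur\'an conjecture, and even an unconditional improvement of the constant $2$ above would already be of interest.
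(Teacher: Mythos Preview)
The statement you are addressing is a \emph{conjecture}; the paper does not prove it and offers no argument beyond the one-line remark that ``Conjecture~\ref{conjecture4} is slightly weaker than Conjecture~\ref{conjecture3}.'' Your first paragraph is precisely a verification of that remark: since $\mathcal H^*_m\subseteq\mathcal H_m$ one has $\mathcal K_m\ge b_m:=\min_{A\in\mathcal H_m}\#A$, and since $\sum_n\sigma_A(n)=(\#A)^2$ one has $\ell_m=b_m^2/m$, whence Conjecture~\ref{conjecture3} implies Conjecture~\ref{conjecture4}. That reduction is correct and is exactly the content of the paper's own comment.

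What your proposal does \emph{not} do is prove the conjecture. You reduce Conjecture~\ref{conjecture4} to Conjecture~\ref{conjecture3}, then reduce Conjecture~\ref{conjecture3} to the estimate $\#\{n:r_A(n)=1\}\le\tfrac12 m+o(m)$ for every minimal basis $A$, and then --- as you explicitly acknowledge in your final paragraph --- you stop, because the energy/Cauchy--Schwarz argument you set up only yields inequalities in the wrong direction. So the ``proof proposal'' is in fact a chain of (correct) reformulations terminating at an assertion you yourself identify as essentially the $\mathbb Z_m$ analogue of the Erd\H os--Tur\'an conjecture, which is open. There is no genuine gap in your reasoning, but there is no proof either: you have recovered the paper's observation that Conjecture~\ref{conjecture4} would follow from Conjecture~\ref{conjecture3}, together with the known bound $\liminf_m\ell_m\ge2$, and nothing more.
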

Conjecture \ref{conjecture4} is slightly weaker than Conjecture \ref{conjecture3}. One may expect that $\ell_m$ is `always' attained for some $A\in \mathcal{H}^*_m$. That is,

\begin{conjecture}\label{conjecture5}
We have $\mathcal{K}_m^2=m\ell_m$ for sufficiently large $m$.
\end{conjecture}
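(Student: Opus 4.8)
We first record the trivial half of the equivalence. For every $A\subseteq\mathbb{Z}_{m}$ one has $\sum_{n\in\mathbb{Z}_{m}}\sigma_A(n)=|A|^2$, since each of the $|A|^2$ ordered pairs $(a,b)\in A\times A$ contributes to $\sigma_A(a+b)$ and to no other $\sigma_A(n)$. Consequently $m\ell_m=\min_{A\in\mathcal{H}_m}|A|^2=g_m^2$, where $g_m:=\min_{A\in\mathcal{H}_m}|A|$ is the least size of an additive basis of order $2$ of $\mathbb{Z}_{m}$, and since $\mathcal{H}^*_m\subseteq\mathcal{H}_m$ we get $\mathcal{K}_m^2=\bigl(\min_{A\in\mathcal{H}^*_m}|A|\bigr)^2\ge g_m^2=m\ell_m$ for all $m$. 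Thus Conjecture \ref{conjecture5} is equivalent to the statement that for all sufficiently large $m$ there exists a \emph{minimum-size} additive basis $A$ of $\mathbb{Z}_{m}$ (that is, $|A|=g_m$) which additionally satisfies $\sigma_A(n)\le R_m$ for every $n\in\mathbb{Z}_{m}$.

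The plan is to start from an arbitrary $A_0\in\mathcal{H}_m$ with $|A_0|=g_m$ and to upgrade it to a member of $\mathcal{H}^*_m$ of the same cardinality. Here one can exploit that the representation function is small on average: by the identity above and Theorem \ref{thm1}, $\sum_{n}\sigma_{A_0}(n)=g_m^2=m\ell_m\le mR_m\le 192m$, so $\sigma_{A_0}$ has average at most $192$, and the bad set $E=\{n:\sigma_{A_0}(n)>R_m\}$ is small. The idea is a local surgery: for a bad residue $n_0$, locate an element $a\in A_0$ lying in many of the $\sigma_{A_0}(n_0)$ representations of $n_0$, delete it, and re-insert a single element $a'$ chosen so that every residue that was covered only through $a$ is recovered, while no new residue is pushed above $R_m$. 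Iterating this over $E$, and controlling the interference between successive surgeries (each of which changes $\sigma$ only near a bounded number of residues), should — if it can be carried out — yield the required minimum-size set in $\mathcal{H}^*_m$.

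A more constructive alternative is to revisit Ruzsa's construction and Chen's refinement underlying Lemmas \ref{lem1} and \ref{lem4}, which already produce sets $A$ with $\sigma_A\le R_m$ and $|A|=O(\sqrt m)$, and to trim such a set down to the extremal size $g_m$. This route, however, presupposes an asymptotic formula for $g_m$, equivalently Conjecture \ref{conjecture3} (that a minimum basis has $|A|/\sqrt m\to\sqrt 3$), so it is not obviously easier than the problem at hand.

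The main obstacle is that we have essentially no structural information about the extremal bases realizing $g_m$, and no mechanism that simultaneously keeps $|A|$ minimal and the maximum of $\sigma_A$ bounded. The average of $\sigma_{A_0}$ being $O(1)$ does not prevent a single residue from carrying $\sigma_{A_0}(n)\asymp\sqrt m$, and it is exactly such a spike that the surgery step must remove without enlarging $A_0$; making that step unconditional is the delicate combinatorial core, and it is intertwined with the still-open Conjectures \ref{conjecture3} and \ref{conjecture4}. I expect this step, rather than the surrounding bookkeeping, to be where the real difficulty lies.
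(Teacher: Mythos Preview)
This statement is presented in the paper as a \emph{conjecture}; the paper offers no proof, only the motivating remark that one may expect $\ell_m$ to be attained by some $A\in\mathcal{H}^*_m$.

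Your opening paragraph is correct and worth recording: the identity $\sum_{n\in\mathbb{Z}_m}\sigma_A(n)=|A|^2$ gives $m\ell_m=g_m^2$ with $g_m=\min_{A\in\mathcal{H}_m}|A|$, and since $\mathcal{H}^*_m\subseteq\mathcal{H}_m$ one has $\mathcal{K}_m^2\ge m\ell_m$ unconditionally for every $m$. This cleanly reduces Conjecture~\ref{conjecture5} to the assertion that, for all large $m$, some minimum-size additive basis of $\mathbb{Z}_m$ already lies in $\mathcal{H}^*_m$.

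The rest of your write-up, however, is not a proof but an outline of two strategies, each of which you yourself flag as incomplete. The surgery argument is missing precisely the step you identify: nothing guarantees that a single replacement $a\mapsto a'$ can recover every residue lost by deleting $a$ while creating no new value of $\sigma$ above $R_m$, nor that iterating such replacements terminates without increasing $|A|$. The constructive alternative via the Ruzsa--Chen sets presupposes Conjecture~\ref{conjecture3}. So what you have is an honest discussion of the obstruction rather than a proof, and the statement remains open --- in agreement with its status in the paper.
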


\section*{Acknowledgments}
The authors would like to thank Professor Y.--G. Chen in Nanjing Normal University for pointing out some gaps in a former version of the manuscript. Also, many thanks to Doctors Guilin Li and Li--Yuan Wang for their generous help of the numerical calculations displayed in Lemma \ref{lem3} by computer.

The first named author is supported by National Natural Science Foundation of China  (Grant No. 12201544), Natural Science Foundation of Jiangsu Province, China (Grant No. BK20210784), China Postdoctoral Science Foundation (Grant No. 2022M710121), the foundations of the projects "Jiangsu Provincial Double--Innovation Doctor Program'' (Grant No. JSSCBS20211023) and "Golden  Phoenix of the Green City--Yang Zhou'' to excellent PhD (Grant No. YZLYJF2020PHD051).

The second named author is support by the National Key Research and Development Program of China (Grant No. 2021YFA1000700) and National Natural Science Foundation of China (Grant No. 11922113).

\end{document}